\newtheorem{Th}{Theorem}[section]
\newtheorem{Ex}[Th]{Example}
\newtheorem{Lemma}[Th]{Lemma}
\newtheorem{Def}[Th]{Definition}
\newtheorem{Prop}[Th]{Proposition}
\newtheorem{Cor}[Th]{Corollary}
\newcommand{\Li}{\mathcal{L}}
\def\R{\mathbb R}
\def\C{\mathbb C}
\def\N{\mathbb N}
\def\Z{\mathbb Z}
\newcommand{\D}{\mathbb{D}}
\newcommand{\Sch}{\mathcal{S}}
\newcommand{\beqsn}{\arraycolsep1.5pt\begin{eqnarray*}}
\newcommand{\eeqsn}{\end{eqnarray*}\arraycolsep5pt}
\newcommand{\beqs}{\arraycolsep1.5pt\begin{eqnarray}}
\newcommand{\eeqs}{\end{eqnarray}\arraycolsep5pt}
\title{A note on supercyclic operators in locally convex spaces}
\author{Angela A. Albanese}
\address{
Dipartimento di Matematica e Fisica ``E. De Giorgi''\\
Universit\`a del Salento\\
Via per Arnesano, C.P. 193\\
73100 Lecce\\
Italy}
\email{angela.albanese@unisalento.it}
\author{David Jornet}
\address{
Instituto Universitario de Matem\'atica Pura y Aplicada IUMPA\\
Universitat Po\-li\-t\`ecni\-ca de Val\`encia\\
Camino de Vera, s/n\\
E-46071 Valencia\\
Spain}
\email{djornet@mat.upv.es}
\begin{document}

\keywords{Supercyclic operators, doubly power bounded operators, isometry, locally convex spaces}
\subjclass[2010]{Primary 46A04, 47A16}

\begin{abstract}
We treat some questions related to supercyclicity of continuous  linear operators when acting in locally convex spaces. We extend results of Ansari and Bourdon and consider doubly power bounded operators in this general setting.  Some examples are given.
\end{abstract}

\maketitle

\section{Introduction and preliminaries}

Let $X$ be a {\em separable} locally convex Hausdorff space (lcHs) and let $\Gamma_{X}$ be the family of all continuous seminorms on $X$. We denote by $\mathcal{L}(X)$ the space of all linear and continuous operators $T:X\to X$. We say, as in the Banach case,  that an operator $T\in\Li(X)$, where $X$ is a lcHs, is  \emph{supercyclic} if there exists $x\in X$ such that the set $\{\lambda T^{n}x:\, \lambda\in \C,\, n\in\N_0\}$ is dense in $X$. In this case, any such a vector is called a \emph{supercyclic vector} for $T$. 

In a recent paper, Aleman and Suciu~\cite{AS} study ergodic theorems for a large class of operator means when the operators act in a Banach space. In particular, they extend a previous result of Ansari and Bourdon~\cite{AB} about power bounded and supercyclic operators on Banach spaces. Motivated by these previous works, we study supercyclic operators acting in a locally convex space and extend some of the results in \cite{AB} to this general setting. Indeed, in Section 2, we extend Theorems 2.1 and 2.2 of \cite{AB}. We have to mention that there is a very general version of \cite[Theorem 3.2]{AB} in \cite{BFS}, and a version of this result for locally convex spaces can be found in \cite[Proposition 1.26]{BM} (see Theorem~\ref{Ansari-th-3.2}).

It is mentioned in \cite{AB}  that ``No isometry on the Banach space $X$ can be supercyclic''. In Section 3, we present some results in this direction when the operators act in the more general setting of a lcHs. Let $X$ be a lcHs and let $\Gamma_{X}$ be the family of all continuous seminorms on $X$.  We say that a subfamily $\Gamma\subseteq \Gamma_{X}$ \emph{defines} or \emph{generates} the topology of $X$ if for every $q\in\Gamma_X$ there exist $p\in\Gamma$ and $\lambda>0$ such that $q\le \lambda p$ (i.e., $q(x)\le \lambda p(x)$ for all $x\in X$).  An operator $T\in \mathcal{L}(X)$ is said to be a $\Gamma$-isometry for some $\Gamma\subseteq \Gamma_{X}$ generating the lc-topology of $X$ if $p(Tx)=p(x)$ for all $p\in\Gamma$ and $x\in X.$ We prove that if $T\in\Li(X)$ is \emph{bijective}, then $T$ is a  $\Gamma$-isometry (for some $\Gamma\subseteq \Gamma_{X}$ generating the lc-topology of $X$) if and only if $T$ is \emph{doubly power bounded} (see Definition~\ref{doubly-pbd}).  There is a large literature of doubly power bounded operators in Banach spaces; we refer to \cite{AA,Lo}, for instance. 

Finally, in Section 4 we use the results of previous sections to give examples of operators that are non-supercyclic, or even of operators which are power bounded and supercyclic or power bounded and non-supercyclic,  acting in Banach and in (non-normable) Fr\'echet spaces. The examples should be compared with \cite{Mu1,Mu2}.

\section{Supercyclic operators in locally convex spaces}

The aim of this section is to extend to the setting of lcHs'  some results about supercyclic operators  due to Ansari and Bourdon \cite{AB}. Let $X$ be a lcHs. We say that an operator $T\in\Li(X)$ is \emph{power bounded} if the sequence $(T^{n})_{n}$ of powers of $T$ is equicontinuous, i.e., for all $p\in\Gamma_X$ there exists $q\in \Gamma_X$ such that  $p(T^{n}x)\le  q(x)$ for all $n\in\N$ and $x\in X$.
\begin{Lemma}
\label{lemma-gamma-isometry}
Let $X$ be a lcHs with $\dim X\ge 2$ and let $T\in\Li(X)$. If $T$ is a $\Gamma$-isometry for some $\Gamma\subseteq \Gamma_{X}$ generating the lc-topology of $X$, then $T$ cannot be a supercyclic operator.
\end{Lemma}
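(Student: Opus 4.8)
The plan is to assume, towards a contradiction, that $T$ admits a supercyclic vector $x$ and to extract from the $\Gamma$-isometry property a rigid constraint on the orbit. First I would note that $x\ne 0$ (otherwise the orbit is $\{0\}$, which is not dense since $\dim X\ge 2$), and that iterating $p(Tz)=p(z)$ gives $p(T^{n}x)=p(x)$, hence $p(\lambda T^{n}x)=|\lambda|\,p(x)$ for every $p\in\Gamma$, $\lambda\in\C$ and $n\in\N_0$. The key observation is then that, for any two $p,q\in\Gamma$, each orbit point satisfies $q(x)\,p(\lambda T^{n}x)=|\lambda|\,p(x)q(x)=p(x)\,q(\lambda T^{n}x)$; that is, the continuous function $z\mapsto q(x)\,p(z)-p(x)\,q(z)$ vanishes on the whole orbit. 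Since the orbit is dense and this function is continuous, it vanishes identically:
\[ q(x)\,p(z)=p(x)\,q(z)\qquad\text{for all }z\in X. \]

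Next I would turn this identity into structural information about $\Gamma$. Since $x\ne 0$ and $X$ is Hausdorff, some continuous seminorm is positive at $x$, and because $\Gamma$ generates the topology there is $q_0\in\Gamma$ with $q_0(x)>0$. Taking $q=q_0$ above yields $p=\tfrac{p(x)}{q_0(x)}\,q_0$ as functions on $X$, for every $p\in\Gamma$; thus all seminorms in $\Gamma$ are non-negative multiples of the single seminorm $q_0$. As $\Gamma$ generates the topology, so does $q_0$ alone, and Hausdorffness forces $q_0(z)=0\Rightarrow z=0$, i.e.\ $q_0$ is a norm. Consequently the topology of $X$ is normable and $T$ is a linear isometry for the norm $q_0$. (Equivalently: if $\Gamma$ contained two seminorms that are not proportional, the displayed identity would already be violated at some point, contradicting the existence of the supercyclic vector; the only surviving possibility is the normable one.)

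Finally I would reduce to the classical Banach setting. Completing $(X,q_0)$ to a Banach space $\widetilde X$, the isometry $T$ extends to an isometry $\widetilde T$ on $\widetilde X$, and since $\{\lambda T^{n}x:\lambda\in\C,\,n\in\N_0\}$ is dense in $X$ and $X$ is dense in $\widetilde X$, the vector $x$ is supercyclic for $\widetilde T$; moreover $\dim\widetilde X\ge 2$. This contradicts the fact, quoted from \cite{AB}, that no isometry of a Banach space of dimension at least two is supercyclic, completing the proof. I expect the main obstacle to be precisely this last step: the entire difficulty of the lemma is concentrated in the \emph{single-seminorm} (normable) case, where the projective argument above is vacuous; there the impossibility is the genuinely Banach-space phenomenon and must be imported from the classical result. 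The remaining points requiring care are routine but should be verified: that $x\ne 0$, that some $q_0\in\Gamma$ is positive at $x$, and the passage to the limit in the density argument (in particular, that any $q\in\Gamma$ with $q(x)=0$ is forced to vanish identically and may be discarded).
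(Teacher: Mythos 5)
Your proof is correct, but it takes a genuinely different route from the paper's. The paper never shows the space is normable: it picks, via Hahn--Banach, functionals $u,v$ separating $y$ and $Ty$, dominates $\max\{|u|,|v|\}$ by a multiple of a single $p\in\Gamma$, and then passes to the quotient $X/\ker p$ equipped with the norm $\hat p$; the $\Gamma$-isometry induces an isometry on this quotient (this is where $p\in\Gamma$ is used), which extends to the Banach completion, the image $Q_p y$ of the supercyclic vector remains supercyclic there, and the choice of $u,v$ guarantees $\dim(X/\ker p)\ge 2$, so \cite[Theorem 2.1]{AB} applies. You instead exploit the projective structure of the orbit to get a rigidity statement: on each orbit point $\lambda T^n x$ one has $q(x)p(\cdot)=p(x)q(\cdot)$, and density of the orbit plus continuity propagates this identity to all of $X$, forcing every seminorm in $\Gamma$ to be a scalar multiple of one fixed $q_0$; hence $X$ itself is normable and one completes directly. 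Both arguments funnel into the same imported Banach-space fact from \cite{AB}, and both are sound. What your approach buys is a stronger structural conclusion (a $\Gamma$-isometry with a supercyclic vector can only live on a normable space) and it dispenses with the quotient construction and the Hahn--Banach step, since $\dim\widetilde X\ge\dim X\ge 2$ is automatic; what the paper's approach buys is locality and robustness --- it only ever manipulates one seminorm at a time and is the template reused in Theorem~\ref{ansari-bourdon1}, where the seminorms $\gamma_p$ are built ad hoc and a global proportionality argument of your kind would not be the natural tool. Your bookkeeping of the degenerate cases (orbit forces $x\ne 0$; some $q_0\in\Gamma$ with $q_0(x)>0$ exists because $\Gamma$ generates a Hausdorff topology; any $q\in\Gamma$ with $q(x)=0$ vanishes identically) is all accurate.
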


\begin{proof}
Suppose that there exists $y\neq 0$ such that $\{\lambda T^{n}y:\, \lambda\in \C,\, n\in\N_0\}$ is dense in $X$. 
Observe that the vectors $y$ and $Ty$ are linearly independent since, if this is not the case, as $T$ is supercyclic, $\{\lambda y\,:\,\lambda\in\C\}$ is dense and closed in $X$, but this is not possible because $\dim X\ge 2$. Hence, by Hahn-Banach theorem we can find $u,v\in X'$ such that $u(y)=1, u(Ty)=0$ and $v(y)=0, v(Ty)=1$. We denote $q=\max\{|u|,|v|\}\in \Gamma_{X}$. Since $\Gamma$ is generating the locally convex topology of $X$, there exist $p\in \Gamma$ and $\lambda>0$ such that $q\le \lambda p$. Now, we consider the quotient space $\big(\frac{X}{{\rm Ker}\, p},\hat{p}\big)$ and  denote by $Q_p:X\to \frac{X}{{\rm Ker}\, p}$ the canonical quotient map, and by $\hat{p}:\frac{X}{\ker p}\to [0,+\infty[$  the norm $\hat{p}(Q_{p}x):=p(x)$, which is well-defined because if $z\in \ker p$ then for every $x\in X$, $p(x+z)=p(x)$.  Then, $\dim \frac{X}{\ker  p}\ge 2$. In fact, if there is $\mu\in\C$ such that $y+\ker p=\mu \cdot Ty+\ker p$, then $y=\mu\cdot Ty+z$ for some $z\in \ker p$, which implies:
$$
1=u(y)=\mu \cdot u(Ty)+u(z)=0,
$$
a contradiction.

Now, we prove that  there is an isometry $T_p:\frac{X}{\ker p}\to \frac{X}{\ker p}$ satisfying $T_pQ_p=Q_pT$. Indeed, $T_p$ is well-defined because $Q_p(x-y)=0$ implies that $x-y\in{\ker p}$ and hence, $p(T(x-y))=p(x-y)=0$. Accordingly,  $T(x-y)\in {\ker p}$ and so $Q_pT(x-y)=0$.   On the other hand, for each $x\in X$, we have, by the definition of $\hat{p},$ $\hat{p}(T_pQ_px)=\hat{p}(Q_px)$. 
This means that $T_p$ is an isometry from $\frac{X}{\ker p}$ into itself.  It follows that $T_p$ extends to an isometry $\widetilde{T}_p$ on $\big(\frac{X}{\ker p},\hat{p}\big)^{\sim} =:\widetilde{X}_p$ into itself, where $\widetilde{X}_p$ is the Banach completion  of ${X}_p$. 

Next, we observe that $Q_py$ is also a supercyclic vector for $\widetilde{T}_p$. In fact, for each $n\in\N$, we have
\begin{eqnarray*}
T^n_p Q_py &=& T^{n-1}_p T_p Q_py= T^{n-1}_p Q_pTy= T_p^{n-2} (T_pQ_p) Ty\\
&=& T^{n-2}_p Q_p T^2 y=\ldots =Q_p T^n y.
\end{eqnarray*}
Hence, 
$$
Q_p\big(\{\lambda T^{n}y:\, \lambda\in \C,\, n\in \N_0\}\big)=\big\{\lambda T^n_p(Q_py):\ \lambda\in\C,\ n\in \N_0\big\}.
$$
Since $Q_p:X\to \widetilde{X}_p$ is continuous with dense range, it follows that $\big\{\lambda T^n_p(Q_py):\ \lambda\in\C,\ n\in\N_0\big\}$ is also dense in $\widetilde{X}_p$. This shows that $Q_py$ is a supercyclic vector for $\widetilde{T}_p$ on the Banach space $\widetilde{X_p}$. This is a contradiction because $\widetilde{T}_p$ is an isometry; see \cite[Theorem 2.1]{AB}.
\end{proof}

\begin{Th}
\label{ansari-bourdon1}
Let $X$ be a lcHs and $T\in\Li(X)$. Suppose the following properties are satisfied.
\begin{itemize}
\item[\rm (i)] The operator $T$ is power bounded, and
\item[\rm (ii)] For each $x\in X\setminus\{0\}$, $T^{n}x\nrightarrow 0$ in $X$ as $n\to \infty$.
\end{itemize}
Then $T$ has no supercyclic vectors.
\end{Th}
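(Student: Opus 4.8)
The plan is to argue by contradiction: assume $T$ admits a supercyclic vector $y\neq 0$ and transport the whole situation to a family of Banach spaces on which the Banach-space version of the statement, \cite[Theorem 2.2]{AB}, can be applied. Power boundedness is exactly what allows us to replace each continuous seminorm by a $T$-contractive one generating the same topology; the quotient--completion machinery of Lemma~\ref{lemma-gamma-isometry} then turns each such seminorm into a power bounded supercyclic operator on a separable Banach space. Forcing the orbit to converge to $0$ in every one of these Banach spaces will reassemble into $T^nx\to 0$ in $X$, contradicting hypothesis (ii).

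Concretely, fix $p\in\Gamma_X$. By power boundedness there is $q\in\Gamma_X$ with $p(T^nx)\le q(x)$ for all $n$ and $x$, so $\tilde p(x):=\sup_{n\ge 0}p(T^nx)$ is a continuous seminorm with $p\le\tilde p\le q$; hence the family $\{\tilde p:p\in\Gamma_X\}$ generates the topology of $X$. A one-line computation gives $\tilde p(Tx)=\sup_{n\ge 1}p(T^nx)\le\tilde p(x)$, so $T$ is a contraction for $\tilde p$ and maps $\ker\tilde p$ into itself. Exactly as in Lemma~\ref{lemma-gamma-isometry}, $T$ then induces a contraction $S_p$ on the Banach completion $\widetilde X_p$ of $(X/\ker\tilde p,\widehat{\tilde p})$ determined by $S_pQ_p=Q_pT$, and the relation $S_p^nQ_py=Q_pT^ny$ together with the density of the range of $Q_p$ shows that $Q_py$ is a supercyclic vector for $S_p$.

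Next I would invoke the Banach-space result. Since $\|S_p\|\le 1$, the operator $S_p$ is power bounded and supercyclic on $\widetilde X_p$, which is separable because $Q_p(X)$ is a dense continuous image of the separable space $X$. By \cite[Theorem 2.2]{AB} we obtain $S_p^nz\to 0$ for every $z\in\widetilde X_p$. Taking $z=Q_px$ for an arbitrary $x\in X$ and using $\widehat{\tilde p}(S_p^nQ_px)=\tilde p(T^nx)$ yields $\tilde p(T^nx)\to 0$, whence $p(T^nx)\le\tilde p(T^nx)\to 0$. As $p\in\Gamma_X$ and $x\in X$ were arbitrary, $T^nx\to 0$ in $X$ for every $x$, contradicting (ii); hence $T$ has no supercyclic vector.

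The quotient--completion bookkeeping is routine and identical to Lemma~\ref{lemma-gamma-isometry}. The essential feature — and the step I expect to be the crux — is that here $T$ becomes only a \emph{contraction}, not an isometry, on each $\widetilde X_p$, so Lemma~\ref{lemma-gamma-isometry} does not apply and one genuinely needs the stronger conclusion $S_p^n\to 0$ strongly from \cite[Theorem 2.2]{AB}. The one point requiring care is that \cite[Theorem 2.2]{AB} be applicable to every $S_p$; in particular one must rule out a degenerate one-dimensional quotient on which $S_p$ acts as a unimodular multiple of the identity, for which the conclusion would fail. This cannot happen for supercyclic $T$: if it did, enlarging $p$ by a second functional (as in the proof of Lemma~\ref{lemma-gamma-isometry}) would produce a power bounded supercyclic operator on a Banach space of dimension at least two possessing a unimodular eigenvalue, contradicting \cite[Theorem 2.2]{AB}. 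Granting this, convergence to $0$ in each $\tilde p$ assembles to $T^nx\to 0$ in $X$, the desired contradiction.
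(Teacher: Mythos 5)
Your proposal is correct in substance, but it takes a genuinely different route from the paper's. The paper fixes a Banach-limit-type functional $F$ on $\ell^\infty$ and replaces each $p\in\Gamma_X$ by $\gamma_p(x)=F\bigl((p(T^nx))_n\bigr)$; the shift-invariance of $F$ makes $T$ a $\Gamma$-isometry for the new seminorms, hypothesis (ii) is used precisely to show that the new topology is Hausdorff, and the conclusion then follows from Lemma~\ref{lemma-gamma-isometry}, i.e., ultimately from the weaker Banach fact that isometries are never supercyclic (\cite[Theorem 2.1]{AB}). You instead use the sup-seminorms $\tilde p(x)=\sup_{n\ge 0}p(T^nx)$, under which $T$ is only a contraction, pass to the Banach completions $\widetilde X_p$ of the quotients, and invoke the stronger Banach result \cite[Theorem 2.2]{AB}; hypothesis (ii) enters only at the very end as the contradiction target. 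There is no circularity in this (the Banach-space Theorem 2.2 of \cite{AB} is independent of the present paper), and your route buys something real: it establishes the stronger conclusion $T^nx\to 0$ for every $x$ whenever $T$ is power bounded and supercyclic, i.e., it proves Theorem~\ref{ansari-bourdon2} in one stroke, whereas the paper obtains that result afterwards by a separate $\varepsilon$-argument built on top of Theorem~\ref{ansari-bourdon1}. The price is the reliance on the stronger Banach input, plus the degenerate-quotient issue, which you rightly identify as the crux.

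That one-dimensional-quotient step is the only place needing repair, and the repair should be made structural rather than by contradiction. Since $y$ and $Ty$ are linearly independent (exactly as argued in Lemma~\ref{lemma-gamma-isometry}, which tacitly requires $\dim X\ge 2$ --- as does the theorem itself, since a unimodular scalar on $\C$ satisfies (i), (ii) and is supercyclic), choose $u,v\in X'$ with $u(y)=1$, $u(Ty)=0$, $v(y)=0$, $v(Ty)=1$, and replace every $p$ by $p'=\max\{p,|u|,|v|\}$ before forming the sup-seminorm: then $\ker\widetilde{p'}\subseteq\ker u\cap\ker v$ forces $\dim\widetilde X_{p'}\ge 2$, so \cite[Theorem 2.2]{AB} applies to every quotient, and $p\le p'\le\widetilde{p'}$ still yields $p(T^nx)\to 0$. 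As written, your contradiction argument for the degenerate case is slightly off: the unimodular eigenvalue you produce belongs to the adjoint $S_{p'}'$, not to $S_{p'}$ itself, because eigenvalues do not lift along quotient intertwiners. The simplest correct version of your argument is to note that strong convergence $S_{p'}^n\to 0$ passes through the canonical norm-decreasing intertwiner onto the one-dimensional quotient, where it contradicts $|c|=1$. With either repair the proof is complete.
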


\begin{proof} As in the proof of \cite[Theorem 2.1]{AB}, we fix a linear functional
 $F:\ell^{\infty}\to \R$   with the following properties:
\begin{itemize}
\item[(1)] For every $(x_n)_n,\, (y_n)_n\in\ell^\infty$, if $x_n\leq y_n$ for all $n\in\N$, then $F((x_n)_n)\leq F((y_n)_n)$,
\item[(2)] For  every $(x_n)_n\in\ell^\infty$, $F((x_n)_n)=F((x_{n+1})_n)$,
\item[(3)] $F((x_n)_n)$ is the limit of a subsequence of $(\frac{x_1+\cdots+x_n}{n})_n$.
\end{itemize} 

For each  $p\in\Gamma_X$ we define
$$
\gamma_{p}(x):=F\big((p(T^{n}x))_{n}\big),\quad x\in X.
$$
Then $\gamma_p$ is well-defined by assumption (i). Actually,  $\gamma_{p}$ is a seminorm on $X$ as it  easily follows from the linearity of $F$ combined with its property (1) and with the fact that $p$ is a seminorm. But, $\gamma_p$ is not a norm in general.  So, $\big(X,(\gamma_{p})_{p\in\Gamma}\big)$ is a locally convex space. Moreover, $\big(X,(\gamma_{p})_{p\in\Gamma}\big)$ is Hausdorff because if $x\ne 0$, then assumption (ii) ensures that $T^{n}x\nrightarrow 0$ in $X$ as $n\to \infty$ and hence, $p(T^{n}x)\nrightarrow 0$ as $n\to\infty$ for some $p\in\Gamma_X$. So, there are $(n_{j})_{j}\subset \N$ which tends to infinity and $\delta>0$ such that 
$$
p(T^{n_{j}}y)>\delta>0,\ \ j\in\N.
$$
Since $T$ is power bounded, given this seminorm $p$ there exists  $q\in \Gamma_{X}$ such that
$$
p(T^{n+m}x)\le  q(T^{m}x),\ x\in X,\ n,m\in\N.
$$
Then, fixed $n\in\N$ we find $j\in\N$ with  $ n<n_{j}$. So,
$$
\delta<p(T^{n_{j}}y)=p(T^{n+(n_{j}-n)}y)\le q(T^{n}y).
$$
Therefore, $q(T^{n}y)>\delta>0$ for all $n\in\N.$ Now, property (3) of $F$ shows that $\gamma_q(x)>0$.

We now observe that for every $p\in\Gamma_{X}$ and  $x\in X$, the property (2) of $F$ implies that 
\begin{eqnarray*}
\gamma_p(x)&=& F\big((p(T^{n}x))_{n}\big)=F\big((p(T^{n+1}x))_{n}\big)\\
&=& F\big((p(T^{n}(Tx)))_{n}\big)=\gamma_{p}(Tx).
\end{eqnarray*}
It follows  that $T$ is a $\Gamma$-isometry from $\big(X,(\gamma_{p})_{p\in\Gamma_X}\big)$ into itself. This fact implies that $T$ cannot be a supercyclic operator from $X$ into itself. To see this, we first
note that the inclusion 
$$
i:X	\to \big(X,(\gamma_{p})_{p\in\Gamma_X}\big)
$$
is continuous. Indeed, fixed $p\in\Gamma_X$, by assumption $(i)$ there exist $q\in\Gamma_X$ such that  $p(T^{n}x)\le  q(x)$ for all $x\in X$ and $n\in \N$. It follows for each $x\in X$ that
$$
\gamma_p(x)=F\big((p(T^{n}x))_{n}\big)\le F\big((q(x))_{n}\big)= F(\mathbf{1})q(x).
$$
%
The continuity of $i$  imply that if $x\in X$ is a supercyclic vector for $T$ in $X$ then $x$ is also a  supercyclic vector for $T$ in $\big(X,(\gamma_{p})_{p\in\Gamma_X}\big)$; this is a contradiction by Lemma \ref{lemma-gamma-isometry} because $T$ is a $\Gamma$-isometry from $\big(X,(\gamma_{p})_{p\in\Gamma_X}\big)$ into itself.
\end{proof}

%
We observe that the next result improves Theorem~\ref{ansari-bourdon1}.
\begin{Th}
\label{ansari-bourdon2}
Let $X$ be a lcHs and let $T\in\Li(X)$. If $T$ is power bounded and supercyclic, then $T^{n}x\to 0$ in $X$ as $n\to\infty$ for all $x\in X$.
\end{Th}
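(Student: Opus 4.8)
The plan is to prove the strengthening directly, by showing that the stable subspace
$$
X_0:=\{x\in X:\ T^nx\to 0\ \text{in}\ X\ \text{as}\ n\to\infty\}
$$
coincides with $X$. Clearly $X_0$ is a linear subspace, and it is $T$-invariant since $T^nx\to0$ forces $T^{n+1}x\to0$. The first substantive step is that $X_0$ is \emph{closed}, and here power boundedness enters for the first time: given $p\in\Gamma_X$ pick $q\in\Gamma_X$ with $p(T^mw)\le q(w)$ for all $m$ and all $w\in X$; if $x_j\to x$ with $x_j\in X_0$, then $p(T^mx)\le p(T^m(x-x_j))+p(T^mx_j)\le q(x-x_j)+p(T^mx_j)$, and choosing $j$ large and then $m$ large makes the right-hand side arbitrarily small, so $x\in X_0$.

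Next I would pass to the quotient $Y:=X/X_0$ with canonical map $Q\colon X\to Y$ and induced operator $S\in\Li(Y)$ defined by $SQ=QT$ (well defined because $X_0$ is $T$-invariant). Since $Q$ is continuous, linear and surjective, the image $Qx_0$ of a supercyclic vector $x_0$ of $T$ is supercyclic for $S$, and $S$ is power bounded (for the quotient seminorms $\hat p(Qw)=\inf_{z\in X_0}p(w-z)$ one gets $\hat p(S^n\bar w)\le\hat q(\bar w)$, again using $T$-invariance of $X_0$). The heart of the argument is the claim that on $Y$ one has the implication $S^n\bar y\to0\Rightarrow\bar y=0$; equivalently, $QT^ny\to0$ in $Y$ implies $y\in X_0$. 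To establish it, fix $p$ and the associated $q$; the hypothesis $QT^ny\to0$ means $\inf_{z\in X_0}q(T^ny-z)\to0$, so one may choose $n_0$ and $z_0\in X_0$ with $q(T^{n_0}y-z_0)$ small, and then split $p(T^{n_0+m}y)\le p\big(T^m(T^{n_0}y-z_0)\big)+p(T^mz_0)\le q(T^{n_0}y-z_0)+p(T^mz_0)$, where the last term tends to $0$ as $m\to\infty$ because $z_0\in X_0$. Hence $T^ky\to0$, i.e. $y\in X_0$.

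This claim says exactly that $(Y,S)$ satisfies hypotheses (i) and (ii) of Theorem~\ref{ansari-bourdon1}, so that theorem forces $S$ to have no supercyclic vectors; as $Qx_0$ is supercyclic for $S$, this is a contradiction whenever $Y\neq\{0\}$, forcing $X_0=X$, which is the assertion. I expect two points to require the most care. The first is the claim itself — the passage from convergence modulo $X_0$ to genuine convergence — which is the only new estimate and rests squarely on power boundedness through the splitting trick. The second is a dimension subtlety: Theorem~\ref{ansari-bourdon1} (through Lemma~\ref{lemma-gamma-isometry}) only bites when the ambient space has dimension $\ge 2$, so the degenerate case $\dim Y=1$ must be excluded separately. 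In that case $X_0=\ker\varphi$ for some $\varphi\in X'$ with $\varphi\circ T=c\varphi$, and the claim forces $|c|\ge1$ while power boundedness gives $|c|\le1$, so $|c|=1$; picking $0\neq w\in X_0$ (possible since $\dim X\ge2$) and approximating $w$ by the orbit, $\lambda_kT^{n_k}x_0\to w$, an application of $\varphi$ yields $\lambda_kc^{n_k}\varphi(x_0)\to\varphi(w)=0$ with $\varphi(x_0)\neq0$ (as $x_0\notin X_0$), whence $\lambda_k\to0$; since $(T^{n_k}x_0)_k$ is bounded this gives $\lambda_kT^{n_k}x_0\to0\neq w$, a contradiction.
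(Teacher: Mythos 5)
Your proof is correct, but it takes a genuinely different route from the paper's. The paper argues pointwise: if a supercyclic vector $y$ had $T^ny\not\to 0$, then Theorem~\ref{ansari-bourdon1} (applied to $X$ itself) supplies a single nonzero vector $v$ with $T^nv\to 0$, and a quantitative estimate yields a contradiction --- one approximates $v$ by $c_jT^{n_j}y$, shows $|c_j|$ stays bounded below, and applies a high power $T^m$ to make $q(c_hT^{n_h+m}y)$ simultaneously bounded below (since $q(T^ny)>\delta$) and arbitrarily small (since it is close to $T^mv$); the conclusion for arbitrary $x\in X$ then follows from density of the set of supercyclic vectors plus equicontinuity of $(T^n)_n$. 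You instead argue structurally: you quotient by the closed $T$-invariant stable subspace $X_0$, check that the induced operator $S$ on $X/X_0$ is power bounded, supercyclic, and satisfies hypothesis (ii) of Theorem~\ref{ansari-bourdon1} (your splitting estimate is the analogue of the paper's $\varepsilon/2$-computation), and let that theorem force the quotient to be trivial. So both proofs lean on Theorem~\ref{ansari-bourdon1}, but the paper uses it only to conclude $X_0\neq\{0\}$, while you use it on the quotient to conclude $X_0=X$. Your route is more conceptual and, notably, it exposes a point the paper glosses over: since Lemma~\ref{lemma-gamma-isometry} requires dimension $\ge 2$, Theorem~\ref{ansari-bourdon1} --- and hence Theorem~\ref{ansari-bourdon2} exactly as stated --- fails for one-dimensional spaces (the identity on $\C$ is power bounded and supercyclic), so your explicit handling of the case $\dim(X/X_0)=1$ under the (genuinely necessary) assumption $\dim X\ge 2$ is precisely the care the paper's own proof omits. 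The paper's route, in exchange, avoids quotient machinery and any degenerate-case analysis, and its closing density-plus-equicontinuity step is quicker to write. Two small points to polish in your write-up: closedness of $X_0$ and the approximation $\lambda_kT^{n_k}x_0\to w$ should be phrased for nets (or justified by noting that only finitely many seminorms intervene in each estimate, so sequences suffice), and the assertion $x_0\notin X_0$ deserves its one-line reason, namely that a supercyclic vector cannot lie in a proper closed $T$-invariant subspace.
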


\begin{proof}
We first prove the following claim: if $y\in X$ is a supercyclic vector for $T$, then $T^{n}y\to 0$ in $X$ as $n\to \infty.$
We argue by contradiction and assume that  there is $y\in X$, $y\ne 0$, so that $\{\lambda T^{n}y:\ \lambda\in \C, n\in\N_0\}$ is dense in $X$ but, $T^{n}y\nrightarrow 0$ in $X$ as $n\to\infty.$

Since $T^{n}y\nrightarrow 0$ as $n\to\infty,$ proceeding as in the proof of Theorem~\ref{ansari-bourdon1}, we show that there are some seminorm $q\in\Gamma$ and $\delta>0$ such that  $q(T^{n}y)>\delta>0$ for all $n\in\N.$

Since $T$ is power bounded and supercyclic, from Theorem~\ref{ansari-bourdon1} it follows that there is $v\ne 0$  such that $T^{n}v\to 0$ in $X$. Since $v\ne 0$, there exists $r\in\Gamma_X$ for which $r(v)\ne 0$ because $X$ is Hausdorff. On the other hand, there is $s\in\Gamma_X$ so that $\max\{q,r\}\le s$. Hence, $s(v)\ne 0$ and $s(T^{n}y)>\delta$ for all $n\in\N.$

For simplicity, we denote the seminorm $s$ again by $q$. Now, let $r\in\Gamma_X$, and $r\ge q$ so that $q(T^{n}x)\le r(x)$ for all $n\in\N$ and $x\in X$. We may assume without loss of generality that $q(v)=1$. Since $y$ is a supercyclic vector for $T$, there exist $(c_{j})_{j}\subset \C$ and $(n_{j})_{j}\subset \N$ such that
$$
r(c_{j}T^{n_{j}}y-v)\to 0\ \mbox{as}\ j\to\infty.
$$
It follows that there is $k\in\N$ such that for all $j\ge k$ we have
$$
q(c_{j}T^{n_{j}}y-v)\le r(c_{j}T^{n_{j}}y-v)<\frac{1}{2}.
$$
Therefore, for all $j\ge k$ we have
$$
q(c_{j}T^{n_{j}}y)=q[v-(v-c_{j}T^{n_{j}}y)]\geq q(v)- q(c_{j}T^{n_{j}}y-v)>1-\frac{1}{2}=\frac{1}{2}.
$$
So, for all  $j\ge k$,
$$
\frac{1}{2}<q(c_jT^{n_{j}}y)=|c_{j}| q(T^{n_{j}}y)\le |c_{j}| r(y),
$$ 
which implies that $|c_{j}|>\frac{1}{2r(y)}$ for all  $j\ge k$.

Let $\varepsilon=\frac{\delta}{3r(y)}.$ Since $r(c_{j}T^{n_{j}}y-v)\to 0$ as $j\to\infty$, we can find $h\ge k$ such that 
$$
r(c_{h}T^{n_{h}}y-v)<\frac{\varepsilon}{2}.
$$
But $T^{n}v\to 0$ in $X$ as $n\to\infty$ and so, we can find $m\in\N$ with
$$
q(T^{m}v)\le r(T^{m}v)<\frac{\varepsilon}{2}.
$$
Now, we observe that,
\begin{eqnarray*}
q(c_{h} T^{n_{h}+m}y-T^{m}v)&=&q(T^{m}(c_{h}T^{n_{h}}y-v))\\
&\le & r(c_{h}T^{n_{h}}y-v)<\frac{\varepsilon}{2},
\end{eqnarray*}
and that
$$
q(c_{h}T^{n_{h}+m}y)=|c_{h}|q(T^{n_{h}+m}y)>\delta \frac{1}{2r(y)}.
$$
Consequently,
\begin{eqnarray*}
\frac{\delta}{2r(y)}&<&q(c_{h} T^{n_{h}+m}y)\le q(c_hT^{n_{h}+m}y-T^{m}v)+q(T^{m}v)\\
&< & \frac{\varepsilon}{2}+\frac{\varepsilon}{2}=\varepsilon=\frac{\delta }{3r(y)};
\end{eqnarray*}
a contradiction.

We have proved that $T^ny\to 0$ in $X$ as $n\to\infty$ whenever $y\in X$ is a supercyclic vector for $T$. But, the set of all supercyclic vectors for $T$ is dense in $X$. Indeed, if $y\in X$ is a supercyclic vector for $T$, then also $cT^ky$ is a supercyclic vector for $T$ for all $c\in \C\setminus \{0\}$ and $k\in\N$, as it is easy to see. Now, the density in $X$ of  the set of all supercyclic vectors for $T$ and the equicontinuity of $(T^n)_n$ imply that $T^nx\to 0$ in $X$ as $n\to\infty$ for all $x\in X$. In particular, we get a contradiction with Theorem~\ref{ansari-bourdon1}.
\end{proof}

We finish this section with an extension of \cite[Theorem 3.2]{AB}. An even more general version of this result can be found in \cite[Theorem 2.1]{BFS}. We recall that given $T\in\Li(X)$, the \emph{point spectrum} $\sigma_{p}(T)$ of $T$ consists of all $\lambda\in \C$ such that the operator $\lambda I-T$ is not injective, where $I:X\to X$ denotes the identity operator. For the proof, we refer to \cite[Proposition 1.26]{BM}. 

\begin{Th}\label{Ansari-th-3.2} Let $X$ be a lcHs and $T\in \Li(X)$. If 
$T$ is a supercyclic operator, then the point spectrum of the adjoint operator $T'$ of $T$, $\sigma_p(T')$, contains at most one point.
\end{Th}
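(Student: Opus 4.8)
The plan is to argue by contradiction and to reduce the whole question, via the two putative eigenfunctionals, to an elementary two-dimensional statement about density of an explicit set in $\C^2$. Suppose $\sigma_p(T')$ contains two distinct points $\lambda\neq\mu$, and choose eigenvectors $\phi,\psi\in X'\setminus\{0\}$ of $T'$, so that $\phi(Tz)=\lambda\phi(z)$ and $\psi(Tz)=\mu\psi(z)$ for every $z\in X$. Since they correspond to distinct eigenvalues, $\phi$ and $\psi$ are linearly independent in $X'$. I would then consider the continuous linear map $\Phi:=(\phi,\psi):X\to\C^2$. Linear independence forces $\Phi$ to be surjective: otherwise $\Phi(X)$ would lie in a line $\{\alpha z+\beta w=0\}$, i.e. $\alpha\phi+\beta\psi=0$ with $(\alpha,\beta)\neq 0$, a contradiction.

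Next I fix a supercyclic vector $x$, so that the orbit $O:=\{cT^nx:\ c\in\C,\ n\in\N_0\}$ is dense in $X$. By continuity of $\Phi$ and $\overline{O}=X$ one gets $\C^2=\Phi(X)=\Phi(\overline O)\subseteq\overline{\Phi(O)}$, hence $\Phi(O)$ is dense in $\C^2$. Using the eigenrelations, $\Phi(cT^nx)=(c\lambda^n\phi(x),\,c\mu^n\psi(x))$. Writing $a:=\phi(x)$ and $b:=\psi(x)$, I first note $a\neq0$ and $b\neq0$: if, say, $a=0$, then $\phi$ vanishes on the dense set $O$ and hence $\phi=0$ by continuity, a contradiction. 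Thus
\[
\Phi(O)=\bigl\{(c\lambda^na,\ c\mu^nb):\ c\in\C,\ n\in\N_0\bigr\},
\]
which for each fixed $n$ is a complex line through the origin whose direction is governed by the slope $s_n:=(b/a)(\mu/\lambda)^n$ (when $\lambda\neq0$). The crux is therefore to show that this countable union of lines is \emph{not} dense in $\C^2$, contradicting the density of $\Phi(O)$.

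The main obstacle is exactly this non-density, which I would establish by analysing the set of slopes $\{s_n\}$ and exhibiting an open ball in $\C^2$ disjoint from $\Phi(O)$. Put $\rho:=\mu/\lambda$, so $\rho\neq0,1$. If $|\rho|\neq1$, then $|s_n|=|b/a|\,|\rho|^n$ runs through a geometric progression tending to $0$ or to $\infty$, so $\{s_n\}$ is a discrete subset of $\C$; if $|\rho|=1$, then every $s_n$ lies on the circle $|s|=|b/a|$. In either case $\{s_n\}$ is nowhere dense in $\C$, so there is an open ball $V\subseteq\C$ with $V\cap\{s_n\}=\varnothing$; choosing its centre $s^\ast$ and a sufficiently small $\varepsilon<1$, the ball of radius $\varepsilon$ about $(1,s^\ast)$ meets no point $(c\lambda^na,c\mu^nb)$, because such a point with first coordinate near $1$ (hence nonzero) has slope $s_n$, and continuity of $(z_1,z_2)\mapsto z_2/z_1$ near $(1,s^\ast)$ would force $s_n\in V$. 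This contradiction settles the case $\lambda,\mu\neq0$. Finally, if one eigenvalue vanishes, say $\lambda=0$ (so $\mu\neq0$), then $\Phi(O)$ degenerates to the two lines $\{(ca,cb):c\in\C\}$ (from $n=0$) and $\{0\}\times\C$ (from $n\geq1$), which is trivially non-dense. In both situations the density of $\Phi(O)$, and hence the supercyclicity of $T$, is contradicted, so $\sigma_p(T')$ contains at most one point. The only topological ingredients used are continuity of $\Phi$ and density of the orbit, which is precisely why the Banach-space argument of \cite{AB} transfers verbatim to an arbitrary lcHs once one passes to the finite-dimensional target $\C^2$.
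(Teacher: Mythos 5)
Your proof is correct. The paper gives no argument of its own for this theorem---it simply defers to \cite[Proposition 1.26]{BM}---and your proof is essentially a reconstruction of that standard argument: push the projective orbit into $\C^2$ via the two eigenfunctionals $(\phi,\psi)$ (surjective by linear independence, so the image of the orbit is dense), and then observe that a countable union of complex lines through the origin whose slopes $s_n=(b/a)(\mu/\lambda)^n$ form a nowhere dense set cannot be dense in $\C^2$.
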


\section{Doubly power bounded operators}

In this section, we characterize the operators $T\in\Li(X)$ which are bijective on a locally convex space $X$ such that there is $\Gamma\subseteq \Gamma_X$ defining the topology of $X$ such that $T$ is a $\Gamma$-isometry. The following definition extends the analogous one for Banach spaces (see, for instance, \cite{AA,Lo}).

\begin{Def}\label{doubly-pbd}
An operator $T\in\Li(X)$ is \emph{doubly power bounded} if it is  bijective and  $(T^{k})_{k\in\Z}$ is equicontinuous in $\Li(X)$.
\end{Def}
Observe that if a bijective operator $T\in\Li(X)$ is doubly power bounded then, in particular, $T^{-1}\in\Li(X).$ However, in a locally convex space the open mapping theorem does not hold in general: there is a locally convex space $X$ and a continuous, linear and bijective map $T\in\Li(X)$ which is not open. For instance, consider in $c_{00}$ (the space of eventually null sequences) the norm induced by $c_{0}$ (the sup norm) and the diagonal operator $Te_{i}=i^{-1}e_i,$ $i=1,2,\ldots,$ where $(e_{i})_{i}$ is the canonical basis. The operator $T$ is bijective and continuous on $c_{00}$ but $T^{-1}$ is not continuous since the sequence $(i^{-1/2}e_{i})_{i}$ tends to zero in $c_{00}$ but $(T^{-1}(i^{-1/2}e_{i}))_{i}=(i^{1/2}e_{i})_{i}$, which is not bounded.

\begin{Prop}\label{dpbd-gamma-iso}
An operator  $T\in\Li(X)$ is doubly power bounded if and only if it is bijective and there is $\Gamma\subseteq \Gamma_X$ defining the topology of $X$ such that $T$ is a $\Gamma$-isometry.
\end{Prop}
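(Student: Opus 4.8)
The plan is to establish the two implications separately; the forward direction is essentially a one-line iteration, while the reverse direction requires constructing a suitable generating family of seminorms. For the direction asserting that a bijective $\Gamma$-isometry is doubly power bounded, I would first iterate the isometry identity: if $p(Tx)=p(x)$ for all $p\in\Gamma$ and $x\in X$, then by induction $p(T^{n}x)=p(x)$ for every $n\ge 0$, and using bijectivity, $p(x)=p(T(T^{-1}x))=p(T^{-1}x)$ gives $p(T^{-n}x)=p(x)$ for $n\ge 0$ as well; hence $p(T^{k}x)=p(x)$ for all $k\in\Z$ and $p\in\Gamma$. To get equicontinuity of $(T^{k})_{k\in\Z}$, I fix an arbitrary $q\in\Gamma_X$; since $\Gamma$ generates the topology, there are $p\in\Gamma$ and $\lambda>0$ with $q\le\lambda p$, and then $q(T^{k}x)\le\lambda p(T^{k}x)=\lambda p(x)$ for all $k\in\Z$ and $x\in X$. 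Thus $r:=\lambda p\in\Gamma_X$ witnesses the required equicontinuity.

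For the reverse direction, I would symmetrize each continuous seminorm over the whole group of powers. Given $p\in\Gamma_X$, I define
\[
\tilde p(x):=\sup_{k\in\Z}p(T^{k}x),\qquad x\in X.
\]
By double power boundedness, for this $p$ there is $q\in\Gamma_X$ with $p(T^{k}x)\le q(x)$ for all $k\in\Z$ and $x\in X$, so the supremum is finite and $\tilde p\le q$; being dominated by a continuous seminorm, $\tilde p$ is itself a continuous seminorm, i.e. $\tilde p\in\Gamma_X$. The decisive computation is the shift-invariance
\[
\tilde p(Tx)=\sup_{k\in\Z}p(T^{k+1}x)=\sup_{j\in\Z}p(T^{j}x)=\tilde p(x),
\]
the reindexing $j=k+1$ over all of $\Z$ being exactly what forces equality. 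Setting $\Gamma:=\{\tilde p:p\in\Gamma_X\}\subseteq\Gamma_X$, the operator $T$ becomes a $\Gamma$-isometry by construction.

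It then remains to verify that $\Gamma$ generates the topology of $X$ in the sense of the definition, and this is the only point needing a moment's care. Here I would use the trivial lower bound coming from the $k=0$ term: for every $q\in\Gamma_X$ one has $q(x)=q(T^{0}x)\le\tilde q(x)$, so $q\le\tilde q$ with $\tilde q\in\Gamma$, which is the generating condition with $\lambda=1$. This closes the reverse implication. I do not expect a genuine obstacle; the conceptual heart of the argument, and the reason the hypothesis must be \emph{two-sided}, is precisely that the invariance of $\tilde p$ under $T$ depends on the supremum running over all of $\Z$. Under mere power boundedness the one-sided supremum $\sup_{k\ge 0}p(T^{k}x)$ would only yield $\tilde p(Tx)\le\tilde p(x)$ rather than equality, so bijectivity together with equicontinuity of the full family $(T^{k})_{k\in\Z}$ is exactly what is needed.
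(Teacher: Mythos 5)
Your proof is correct and follows essentially the same route as the paper: the same symmetrized seminorms $\sup_{k\in\Z}p(T^{k}x)$ with the $k=0$ term giving the generating bound and reindexing over $\Z$ giving the isometry, and the same $q\le\lambda p$ estimate for the converse. No gaps; the two arguments differ only in notation and the order of the implications.
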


\begin{proof}
Assume first that $T$ is doubly power bounded. Given $q\in \Gamma_X$, define
$$
r_{q}(x):=\sup_{k\in\Z} q(T^{k}x).
$$
Clearly, taking $k=0$, we have
\begin{equation}\label{rq1}
q(x)\le r_{q}(x), \mbox{ for all }x\in X.
\end{equation}
  On the other hand, since $(T^{k})_{k\in\Z}$ is equicontinuous, given $q\in \Gamma_X$ there is $p\in \Gamma_X$ such that $q(T^{k}x)\le p(x),$ for all $x\in X$ and $k\in\Z.$ This implies that 
\begin{equation}\label{rq2}
r_{q}(x)\le p(x),\quad x\in X.
\end{equation} 
In particular, $r_{q}(x)<\infty$ for all $x\in X$. Moreover, $r_{q}\in \Gamma_X$ as it is easily seen from the facts that $T^{k}$ is linear for all $k\in\Z$ and \eqref{rq2}. We consider 
$$
\Gamma:=\{r_{q}\,:\,q\in \Gamma_X\}.
$$
By \eqref{rq1} \and \eqref{rq2}, $\Gamma$ defines the topology of $X$. We observe that $T$ is a $\Gamma$-isometry since $$r_{q}(Tx)=\sup_{k\in\Z} q(T^{k} Tx)=\sup_{k\in\Z} q(T^{k} x)=r_{q}(x).$$

Now, suppose that $T\in \Li(X)$ is a bijective $\Gamma$-isometry for a set $\Gamma\subseteq \Gamma_X$ defining the topology of $X$. By assumption there exists $T^{-1}:X\to X$ linear. Since $p(Tx)=p(x)$ for all $x\in X$ and $p\in \Gamma$, we have $p(T^{-1}x)=p(x)$ for all $x\in X$ and $p\in \Gamma$. Since $\Gamma$ defines the topology of $X$, $T^{-1}$ is continuous, and moreover,
$$
p(T^{k}x)=p(x),\quad x\in X,\ p\in\Gamma.
$$
Now, we take $q\in \Gamma_X$ arbitrary. There is $p\in\Gamma$, $\lambda>0$ such that $q\le \lambda p.$ For $k\in\Z$ and $x\in X$ we get
$$
q(T^{k}x)\le \lambda p(T^{k}x)=\lambda p(x).
$$
This implies that $(T^{k})_{k\in\Z}$ is equicontinuous.
\end{proof}

\begin{Cor}\label{ext-doubly-pbd}
If $\dim X\ge 2$ and $T\in \Li(X)$ is doubly power bounded, then $T$ is not supercyclic.
\end{Cor}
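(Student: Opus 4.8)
The plan is to read off the result immediately from the two earlier statements, since the corollary is a direct combination of Proposition~\ref{dpbd-gamma-iso} and Lemma~\ref{lemma-gamma-isometry}. The only thing to verify is that the hypotheses of one feed cleanly into the hypotheses of the other, which they do once we recall that ``defines'' and ``generates'' the topology are, by the convention fixed in the introduction, the same notion.

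Concretely, I would argue as follows. Assume $\dim X\ge 2$ and let $T\in\Li(X)$ be doubly power bounded. By Proposition~\ref{dpbd-gamma-iso}, $T$ is bijective and there exists a subfamily $\Gamma\subseteq\Gamma_X$ defining the topology of $X$ such that $T$ is a $\Gamma$-isometry, that is, $p(Tx)=p(x)$ for all $p\in\Gamma$ and all $x\in X$. Since $\Gamma$ defines the topology of $X$, it in particular generates it in the sense required by Lemma~\ref{lemma-gamma-isometry}.

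I would then invoke Lemma~\ref{lemma-gamma-isometry} with this same $\Gamma$: because $\dim X\ge 2$ and $T$ is a $\Gamma$-isometry for a family $\Gamma$ generating the lc-topology of $X$, the lemma yields that $T$ cannot be supercyclic, which is exactly the assertion. There is essentially no obstacle here: the content of the corollary lies entirely in the preceding Proposition (which produces the isometric seminorm system from double power boundedness) and the Lemma (which rules out supercyclicity for $\Gamma$-isometries in dimension at least two). The proof is thus a one-line synthesis, and the only point worth flagging explicitly is the identification of the ``defines''/``generates'' terminology so that the output of Proposition~\ref{dpbd-gamma-iso} matches the input of Lemma~\ref{lemma-gamma-isometry}.
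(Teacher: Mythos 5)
Your proof is correct and is exactly the paper's argument: the paper's own proof is the one-line statement that the corollary follows from Proposition~\ref{dpbd-gamma-iso} and Lemma~\ref{lemma-gamma-isometry}, which you have simply spelled out, including the (correct) observation that ``defines'' and ``generates'' are synonymous by the convention in the introduction.
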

\begin{proof}
This follows from Proposition~\ref{dpbd-gamma-iso} and Lemma~\ref{lemma-gamma-isometry}.
\end{proof}

\section{Examples}

We present different examples of power bounded and supercyclic or non-supercyclic operators in a Banach space or in non-normable Fr\'echet spaces. First of all, we observe that every $\Gamma$-isometry, for some $\Gamma$ generating the lc-topology of $X$, is obviously a power bounded operator. The first example is well known.

\begin{Ex}{\rm Our first example is \cite[Example 1.15]{BM}, which is a positive example in Banach spaces. Let $B_{\mathbf{w}}$ be the weighted backward shift in $\ell^{2}(\N)$. This operator is defined by $B_{\mathbf{w}}(e_{1})=0$ and $B_{\mathbf{w}}(e_{n})=w_{n}e_{n-1}$ for $n\ge 2$ where $(e_{n})_{n\in\N}$ is the canonical basis in $\ell^{2}(\N)$ and $\mathbf{w}=(w_{n})_{n\ge 2}$ is a bounded sequence of positive numbers. By \cite[Theorem 1.14]{BM}, $B_{\mathbf{w}}$ is supercyclic. Moreover, if the sequence $\mathbf{w}$ satisfies $w_{n}\le 1$ for all $n\ge 2$, it is easy to see that $B_{\mathbf{w}}$ is also power bounded.

}
\end{Ex}

\begin{Ex}{\rm
Given an open and connected (=\emph{domain}) subset $U$ in $\C^{d}$ we denote 
$$
H(U)=\{f:U\to\C,\ f\mbox{ holomorphic in }U\}.
$$
A composition operator $C_{\varphi}:H(U)\to H(U)$ with (holomorphic) {\em symbol}  $\varphi:U\to U$ 
is the linear and continuous operator given  by $C_{\varphi}(f)(z):=f(\varphi(z))$ for $z\in U$ and $f\in H(U)$.

\begin{itemize}
\item[a)] Let $U=\D$ be the open unit disk in $\C$ and $\Gamma$  the family of seminorms $\{p_{k}\,:\,k\in\N\}$ where $p_{k}(f):=\sup_{|z|\le 1-\frac{1}{k}} |f(z)|$, for $k\in\N$ and $f\in H(\D)$.  If $\theta\in\C$ with $|\theta|=1$, the composition operator $C_{\varphi}:H(\D)\to H(\D)$ with symbol $\varphi(z):=\theta z$ (a \emph{rotation}) clearly satisfies 
$$
p_{k}(C_{\varphi}f)=p_{k}(f),\quad f\in H(\D), \quad k\in\N.
$$
Hence, $C_{\varphi}$ is a $\Gamma$-isometry. Moreover, it is bijective and doubly power bounded.  Since $\Gamma$ generates the lc-topology of $H(\D),$ the composition operator $C_{\varphi}$ with symbol given by a rotation cannot be supercyclic in the (non-normable) Fr\'echet space $H(\D)$. 

\item[b)] On the other hand, Bonet and Doma\'nski~\cite{BD} characterized,  in terms of its symbol,  when the composition operator $C_{\varphi}:H(U)\to H(U)$ is power bounded in a very general situation (namely, when $U$ is a Stein manifold), proving that the composition operator is power bounded if and only if it is mean ergodic, i.e., the sequence of Ces\`aro means 
$(\frac{1}{n}\sum_{j=0}^{n-1} C_{\varphi}^{n}(f))_{n}$
converges in $H(U)$ for each $f\in H(U)$.   Using their results, we can give an example in a very general setting:  let $U$ be  a topologically contractible bounded strongly pseudoconvex domain in $\C^{d}$ with $\mathcal{C}^{3}$ boundary and $\varphi:U\to U$ a holomorphic symbol with a fixed point (for example, when $d=1$ and $U=\D$, the open unit disk).  Then by \cite[Corollary 1]{BD} the composition operator $C_{\varphi}:H(U)\to H(U)$ is power bounded and, hence, it cannot be supercyclic. In fact, if $C_{\varphi}$ is supercyclic, by Theorem~\ref{ansari-bourdon2},  $C_{\varphi}^{n}(f)=f\circ\varphi^{n}\rightarrow 0$ in $H(U)$ for each $f\in H(U)$, but this is not true for $f\equiv 1$. We observe that there are holomorphic symbols $\varphi$ such that  $C_{\varphi}$ has dense range. For instance, when $\varphi$ is an automorphism. We can find similar examples in spaces of real analytic functions; see, e.g., \cite[Corollary 2.5]{BD-ra}.
\end{itemize}

}
\end{Ex}

The following simple example is related to Fr\'echet sequence spaces.

\begin{Ex}{\rm 
We consider a K\"othe sequence space $\lambda_{p}(A)$ with associated matrix $A=(a_{n}(i))_{n,i\in\N}$, with $1\le p\le \infty$. For the precise definition see, for instance, at the beginning of chapter 27 of \cite{MV}; there, the notation is $a_{n}(i)=a_{i,n}$ for the elements of the K\"othe matrix. Given a sequence $(b_{n})_{n}\subseteq \C$ and $\Gamma$ the fundamental sequence of seminorms defined in \cite{MV}, it is easy to see that the diagonal operator
$$
T_{b}:\lambda_{p}(A)\to\lambda_{p}(A),\quad T_{b}(x)=(b_{n}x_{n})_{n},
$$
is a $\Gamma$-isometry if and only if $|b_{n}|=1$ for all $n\in\N$. Moreover, it is doubly power bounded also. Hence, in this case, by Lemma~\ref{lemma-gamma-isometry}, $T_{b}$ cannot be supercyclic. 
}
\end{Ex}

Now, we find an operator that is power bounded and not supercyclic on a Fr\'echet space; see \cite{AS,Mu1,Mu2} for different situations in Banach spaces. This example shows that for a power bounded operator, the thesis in Theorem~\ref{ansari-bourdon2} is not sufficient for the operator  to be supercyclic.

\begin{Ex}\label{ejemplo-Pepe}{\rm 
It is known from \cite[Proposition 4.3]{BBF} that the integration operator 
$$
Jf(z):=\int_{0}^{z} f(\zeta)d\zeta
$$
is power bounded in $H(\C)$ or in $H(\D)$ and, moreover, $J^{n}f$ tends to $0$ as $n$ tends to infinity in the compact-open topology for every $f$ in these spaces. However, the integration operator $J$ is not supercyclic in $H(\C)$ or in $H(\D)$, since it does not have dense range in these spaces.  
}
\end{Ex}
Our last example also shows that the thesis in Theorem~\ref{ansari-bourdon2} is necessary but not sufficient for a power bounded operator to be supercyclic in the Schwartz class $\Sch(\R)$ of rapidly decreasing functions in one variable. We give  examples of  power bounded and non supercyclic operators which have dense range in $\Sch(\R)$.
\begin{Ex}\label{ejemplo-Schwartz}
{\rm 
 If we consider the Schwartz class $\Sch(\R)$ of rapidly decreasing functions in one variable, the composition operator $C_{\varphi}:\Sch(\R)\to \Sch(\R)$ is well defined and continuous if and only if the symbol $\varphi\in C^{\infty}(\R)$ satisfies some conditions \cite[Theorem 2.3]{GJ-Schwartz}, and $C_{\varphi}$ is never compact. On the other hand, $C_{\varphi}:\Sch(\R)\to \Sch(\R)$ is never supercyclic \cite[Corollary 2.2(1)]{FGJ-Schwartz}, but the authors find examples of symbols (namely, any polynomial of even degree greater than one without fixed points) such that $C_{\varphi}:\Sch(\R)\to \Sch(\R)$ is power bounded, mean ergodic and $(C_{\varphi}^{n})_{n}$ converges pointwise to zero in $\Sch(\R)$ \cite[Theorem 3.11, Corollary 3.12]{FGJ-Schwartz}. The authors also show that if the symbol $\varphi$ is monotonically decreasing and the corresponding composition operator is power bounded then $(C_{\varphi})^{2}=I$, the identity, so in this case $C_{\varphi}$ is surjective, and hence it has also dense range~\cite[Theorem 3.8~(b)]{FGJ-Schwartz}. Moreover, besides $\varphi(x)=-x$ there are many monotonically decreasing symbols $\psi$ such that $(C_{\psi})^{2}=I$~\cite[Example 1]{FGJ-Schwartz}.  }
\end{Ex}

\textbf{Acknowledgements.} We are indebted to Prof.~Jos\'e Bonet for his helpful suggestions on the topic of this paper. The authors were partially supported by the project MTM2016-76647-P. The authors are very grateful to the referee for several valuable comments and remarks, and for the careful reading of the paper.


\end{document}